\newcommand{\bgset}[1]{\big\{#1\big\}}
\newcommand{\norm}[2][]{\left\|#2\right\|_{#1}}
\newcommand{\PS}[1]{$(\text{PS})_{#1}$}
\newcommand{\pnorm}[2][]{\if #1'' \left|#2\right|_p \else \left|#2\right|_{#1} \fi}
\newcommand{\pow}{{(p-2)/p}}
\newcommand{\QED}{\qedhere}
\newcommand{\restr}[2]{\left.#1\right|_{#2}}
\newcommand{\set}[1]{\left\{#1\right\}}
\newcommand{\wto}{\rightharpoonup}
\newcommand{\wop}{{p/(p-2)}}
\newcommand{\R}{\mathbb R}
\newcommand{\M}{{\mathcal M}}
\newcommand{\CN}{{\mathcal N}}
\DeclareMathOperator{\dist}{dist}
\DeclareMathOperator{\spn}{span}
\newtheorem{corollary}{Corollary}[section]
\newtheorem{lemma}[corollary]{Lemma}
\newtheorem{theorem}[corollary]{Theorem}
\numberwithin{equation}{section}
\title{\bf A nodal solution of the scalar field equation at the second minimax level\thanks{{\em MSC2010:} Primary 35J61, Secondary 35P30, 35J20
\newline \smallskip \indent\; {\em Key Words and Phrases:} scalar field equation, second minimax level, nodal solution, slow decay condition, dual set}}
\author{\bf Kanishka Perera\\
Department of Mathematical Sciences\\
Florida Institute of Technology\\
Melbourne, FL 32901, USA\\
\em kperera@fit.edu\\
[\bigskipamount]
\bf Cyril Tintarev\\
Department of Mathematics\\
Uppsala University\\
75 106, Uppsala, Sweden\\
\em tintarev@math.uu.se}
\date{}
\begin{document}

\maketitle

\begin{abstract}
We prove the existence of a sign-changing eigenfunction at the second minimax level of the eigenvalue problem for the scalar field equation under a slow decay condition on the potential near infinity. The proof involves constructing a set consisting of sign\nobreakdash-changing functions that is dual to the second minimax class. We also obtain a nonradial sign-changing eigenfunction at this level when the potential is radial.
\end{abstract}

\section{Introduction}

Consider the eigenvalue problem for the scalar field equation
\begin{equation} \label{1.1}
- \Delta u + V(x)\, u = \lambda\, |u|^{p-2}\, u, \quad u \in H^1(\R^N),
\end{equation}
where $N \ge 2$, $V \in L^\infty(\R^N)$ satisfies
\begin{gather}
V(x) \ge 0 \quad \forall x \in \R^N, \label{1.2}\\[10pt]
\lim_{|x| \to \infty} V(x) = V^\infty > 0, \label{1.3}
\end{gather}
and $p \in (2,2^\ast)$. Here $2^\ast = 2N/(N-2)$ if $N \ge 3$ and $2^\ast = \infty$ if $N = 2$. Let
\[
I(u) = \int_{\R^N} |u|^p, \quad J(u) = \int_{\R^N} |\nabla u|^2 + V(x)\, u^2, \quad u \in H^1(\R^N).
\]
Then the eigenfunctions of \eqref{1.1} on the manifold
\[
\M = \bgset{u \in H^1(\R^N) : I(u) = 1}
\]
and the corresponding eigenvalues coincide with the critical points and the corresponding critical values of the constrained functional $\restr{J}{\M}$, respectively. Equation \eqref{1.1} has been studied extensively for more than three decades (see Bahri and Lions \cite{MR1450954} for a detailed account). The main difficulty here is the lack of compactness inherent in this problem. This lack of compactness originates from the invariance of $\R^N$ under the action of the noncompact group of translations, and manifests itself in the noncompactness of the Sobolev imbedding $H^1(\R^N) \hookrightarrow L^p(\R^N)$. This in turn implies that the manifold $\M$ is not weakly closed in $H^1(\R^N)$ and that $\restr{J}{\M}$ does not satisfy the usual Palais-Smale compactness condition at all energy levels.

Least energy solutions, also called ground states, are well-understood. In general, the infimum
\[
\lambda_1 := \inf_{u \in \M}\, J(u)
\]
is not attained. For the autonomous problem at infinity,
\[
- \Delta u + V^\infty\, u = \lambda\, |u|^{p-2}\, u, \quad u \in H^1(\R^N),
\]
the corresponding functional
\[
J^\infty(u) = \int_{\R^N} |\nabla u|^2 + V^\infty\, u^2, \quad u \in H^1(\R^N)
\]
attains its infimum
\[
\lambda^\infty_1 := \inf_{u \in \M}\, J^\infty(u) > 0
\]
at a radial function $w^\infty_1 > 0$ (see Berestycki and Lions \cite{MR695535}), and this minimizer is unique up to translations (see Kwong \cite{MR969899}). For the nonautonomous problem, we have $\lambda_1 \le \lambda^\infty_1$ by \eqref{1.3} and the translation invariance of $J^\infty$, and $\lambda_1$ is attained if the inequality is strict (see Lions \cite{MR778970,MR778974}).

As for higher energy solutions, also called bound states, radial solutions have been extensively studied when the potential $V$ is radially symmetric (see, e.g., Berestycki and Lions \cite{MR695536}, Jones and K{\"u}pper \cite{MR846391}, Grillakis \cite{MR1054554}, Bartsch and Willem \cite{MR1237913}, and Conti et al. \cite{MR1773818}). The subspace $H^1_r(\R^N)$ of $H^1(\R^N)$ consisting of radially symmetric functions is compactly imbedded into $L^p(\R^N)$ for $p \in (2,2^\ast)$ by a compactness result of Strauss \cite{MR0454365}. So in this case the restrictions of $J$ and $J^\infty$ to $\M \cap H^1_r(\R^N)$ have increasing and unbounded sequences of critical values given by a standard minimax scheme. Furthermore, Sobolev imbeddings remain compact for subspaces with any sufficiently robust symmetry (see, e.g., Bartsch and Willem \cite{MR1244943}, Bartsch and Wang \cite{MR1349229}, and Devillanova and Solimini \cite{MR2895950}). As for multiplicity in the nonsymmetric case, Zhu \cite{MR969507}, Hirano \cite{MR1866736}, Clapp and Weth \cite{MR2103845}, and Perera \cite{Pe17} have given sufficient conditions for the existence of $2$, $3$, $N/2 + 1$, and $N - 1$ pairs of solutions, respectively (see also Li \cite{MR1205151}). There is also an extensive literature on multiple solutions of scalar field equations in topologically nontrivial unbounded domains (see the survey paper of Cerami \cite{MR2278729}).

The second minimax level is defined as
\[
\lambda_2 := \inf_{\gamma \in \Gamma_2}\, \max_{u \in \gamma(S^1)}\, J(u),
\]
where $\Gamma_2$ is the class of all odd continuous maps $\gamma : S^1 \to \M$ and $S^1$ is the unit circle. In the present paper we address the question of whether solutions of \eqref{1.1} at this level, previously obtained in Perera and Tintarev \cite{PeTi}, are nodal (sign-changing), as it would be expected in the linearized problem where the nonlinearity $|u|^{p-2}\, u$ is replaced by $U(x)\, v$ with the decaying potential $U(x) = |u|^{p-2}$. Ultimately, nodality of a solution $u$ follows from the orthogonality relation \eqref{orth} below, which can be understood as $\left<u^\ast,v_1(u)\right> = 0$, where $v_1(u)$ is the ground state of the linearized problem with the potential $U(x) = |u|^{p-2}$ and $u^\ast = |u|^{p-2}\, u$ is the duality conjugate of $u$. A similar argument has been used in Tarantello \cite{MR1141725} to obtain a nodal solution of a critical exponent problem in a bounded domain (see also Coffman \cite{MR940607}).

Recall that
\[
w_1^\infty(x) \sim C_0\, \frac{e^{- \sqrt{V^\infty}\, |x|}}{|x|^{(N-1)/2}} \text{ as } |x| \to \infty
\]
for some constant $C_0 > 0$, and that there are constants $0 < a_0 \le \sqrt{V^\infty}$ and $C > 0$ such that if $\lambda_1$ is attained at $w_1 \ge 0$, then
\[
w_1(x) \le C\, e^{- a_0\, |x|} \quad \forall x \in \R^N
\]
(see Gidas et al. \cite{MR544879}). Write
\[
V(x) = V^\infty - W(x),
\]
so that $W(x) \to 0$ as $|x| \to \infty$ by \eqref{1.3}, and write $\pnorm[q]{\cdot}$ for the norm in $L^q(\R^N)$. Our main result is the following.

\begin{theorem} \label{Theorem 1.1}
Assume that $V \in L^\infty(\R^N)$ satisfies \eqref{1.2} and \eqref{1.3}, $p \in (2,2^\ast)$, and
\[
W(x) \ge c_0\, e^{- a\, |x|} \quad \forall x \in \R^N
\]
for some constants $0 < a < a_0$ and $c_0 > 0$. If $W \in L^\wop(\R^N)$ and
\[
\pnorm[\wop]{W} < (2^\pow - 1)\, \lambda^\infty_1,
\]
then equation \eqref{1.1} has a nodal solution on $\M$ for $\lambda = \lambda_2$.
\end{theorem}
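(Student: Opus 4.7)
The plan is to combine the existence of a critical point at the second minimax level $\lambda_2$, already established in Perera--Tintarev under the very hypothesis $\pnorm[\wop]{W} < (2^\pow - 1)\, \lambda^\infty_1$ together with the slow decay bound $W(x) \ge c_0\, e^{- a\, |x|}$ with $a < a_0$, with a dual-set argument that forces the critical point to be sign-changing. I would construct a set $\CN \subset \M$ built from the linearized eigenvalue problem at each $u \in \M$; elements of $\CN$ will be automatically nodal, and a Borsuk--Ulam argument will show that $\CN$ meets $\gamma(S^1)$ for every $\gamma \in \Gamma_2$, so that $\lambda_2 \ge \inf_{\CN} J$. A deformation step will then produce a minimizer in $\CN$ at level $\lambda_2$, giving the desired nodal solution.

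For each $u \in \M$, consider the linearized eigenvalue problem
\[
- \Delta v + V(x)\, v = \mu\, |u|^{p-2}\, v, \quad v \in H^1(\R^N).
\]
Since $|u|^{p-2} \in L^\wop(\R^N)$ acts as a compact multiplier from $H^1$ into $H^{-1}$ under \eqref{1.2}--\eqref{1.3}, this problem has a simple smallest eigenvalue $\mu_1(u)$ attained at a positive eigenfunction $v_1(u)$, normalized by $\pnorm[2]{v_1(u)} = 1$ and $v_1(u) > 0$, and the map $u \mapsto v_1(u)$ is continuous on $\M$. Define
\[
\CN = \set{u \in \M : \int_{\R^N} |u|^{p-2}\, u\, v_1(u) = 0}.
\]
If $u \in \CN$ were of constant sign, both $|u|^{p-2}\, u$ and $v_1(u)$ would be of constant sign and the integral would be nonzero; hence every $u \in \CN$ changes sign. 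Moreover $\Phi(u) := \int_{\R^N} |u|^{p-2}\, u\, v_1(u)$ is odd, because $v_1(-u) = v_1(u)$, and continuous on $\M$, so for every $\gamma \in \Gamma_2$ the composition $\Phi \circ \gamma : S^1 \to \R$ is a continuous odd function which must vanish somewhere; thus $\gamma(S^1) \cap \CN \ne \emptyset$ and $\lambda_2 \ge \inf_{u \in \CN} J(u)$.

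To finish, I would invoke an odd pseudo-gradient deformation on $\M$: if $\inf_{\CN} J < \lambda_2$ one could push some $\gamma \in \Gamma_2$ approximating the minimax below $\lambda_2$ through a neighborhood of $\CN$, contradicting the definition of $\lambda_2$; hence $\lambda_2 = \inf_{\CN} J$, and \PS{\lambda_2} combined with the deformation yields $u_0 \in \CN$ with $J(u_0) = \lambda_2$. This $u_0$ is a critical point of $\restr{J}{\M}$, and hence a solution of \eqref{1.1}, which is nodal by the defining property of $\CN$.

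The main obstacle I anticipate is the analytic bookkeeping surrounding $u \mapsto v_1(u)$: proving simplicity of $\mu_1(u)$ and continuity of $v_1(u)$ throughout $\M$ is complicated by the degeneracy of the weight $|u|^{p-2}$ on the zero set of $u$, and a careful choice of the normalization (including the sign convention $v_1(u) > 0$) is needed for $\Phi$ to be genuinely odd. A secondary difficulty is the design of an $S^1$-equivariant pseudo-gradient flow on $\M$ that respects the nonlinear constraint $\Phi(u) = 0$ while preserving Palais--Smale compactness at the threshold; the slow decay lower bound on $W$, not used in the linking step itself, is precisely what is needed here to rule out the second concentration level $2\, \lambda^\infty_1$ and keep \PS{\lambda_2} alive.
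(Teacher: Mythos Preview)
Your set $\CN$ coincides with the paper's dual set $F$, and your linking step (every $\gamma\in\Gamma_2$ meets $\CN$, hence $\lambda_2 \ge \inf_{\CN} J$) is exactly Lemma~3.1. The gap is in the next step. From the linking you only get $\max_{\gamma(S^1)} J \ge \inf_{\CN} J$, i.e.\ $\lambda_2 \ge \inf_{\CN} J$; your deformation sketch (``if $\inf_{\CN} J < \lambda_2$ push $\gamma$ below $\lambda_2$ through a neighborhood of $\CN$'') does not produce a contradiction, because nothing forces the maximum of $J$ along $\gamma(S^1)$ to be attained on $\CN$. What is actually needed---and what the paper proves as Lemma~3.2---is the \emph{reverse} inequality $J(u)\ge\lambda_2$ for every $u\in\CN$. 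This is the heart of the dual-set argument and requires an ingredient you do not mention: the second constrained eigenvalue $\mu_2(u)=\inf\{J(v): K_u(v)=1,\ L_u(v)=0\}$ and an eigenfunction $v_2(u)$. Since $u\in\CN$ means $u$ itself satisfies $K_u(u)=1$ and $L_u(u)=0$, one has $J(u)\ge\mu_2(u)$; on the other hand the two-dimensional span of $v_1(u),v_2(u)$ yields an explicit $\gamma_u\in\Gamma_2$ with $\max_{\gamma_u(S^1)} J \le \mu_2(u)$, whence $\mu_2(u)\ge\lambda_2$.

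There is a second issue with your endgame. Even if you had $\lambda_2=\inf_{\CN} J$, a minimizer of $J$ over $\CN$ is a priori only a critical point subject to the \emph{two} constraints $I(u)=1$ and $\Phi(u)=0$, so an extra Lagrange multiplier appears and you cannot conclude it solves \eqref{1.1}. The paper avoids this by arguing at the level of the critical set $K^{\lambda_2}$ of $J|_{\M}$: once $J\ge\lambda_2$ on $F$ is known, a standard first-deformation-lemma argument shows that if $K^{\lambda_2}\cap F=\emptyset$ one can deform some $\gamma\in\Gamma_2$ with $\gamma(S^1)\subset J^{\lambda_2+\varepsilon}$ into $J^{\lambda_2-\varepsilon}\cup N_\delta(K^{\lambda_2})$, a set disjoint from $F$, contradicting the linking. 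No flow ``respecting the constraint $\Phi(u)=0$'' is needed.
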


Nodal solutions to a closely related problem have been obtained in Zhu \cite{MR969507} and Hirano \cite{MR1866736} under assumptions different from those in Theorem \ref{Theorem 1.1}. We will prove this theorem by constructing a set $F \subset \M$ consisting of sign-changing functions that is dual to the class $\Gamma_2$.

As a corollary we obtain a nonradial nodal solution of \eqref{1.1} at the level $\lambda_2$ when $V$ is radial. Let $\Gamma_{2,\, r}$ denote the class of all odd continuous maps from $S^1$ to $\M_r = \M \cap H^1_r(\R^N)$ and set
\[
\lambda_{2,\, r} := \inf_{\gamma \in \Gamma_{2,\, r}}\, \max_{u \in \gamma(S^1)}\, J(u), \quad \lambda^\infty_{2,\, r} := \inf_{\gamma \in \Gamma_{2,\, r}}\, \max_{u \in \gamma(S^1)}\, J^\infty(u).
\]
Since the imbedding $H^1_r(\R^N) \hookrightarrow L^p(\R^N)$ is compact by the result of Strauss \cite{MR0454365}, these levels are critical for $\restr{J}{\M_r}$ and $\restr{J^\infty}{\M_r}$, respectively. Since $\Gamma_{2,\, r} \subset \Gamma_2$ and
\[
\lambda^\infty_2 := \inf_{\gamma \in \Gamma_2}\, \max_{u \in \gamma(S^1)}\, J^\infty(u)
\]
is not critical for $\restr{J^\infty}{\M}$ (see, e.g., Cerami \cite{MR2278729}), we have $\lambda_2 \le \lambda_{2,\, r}$ and $\lambda^\infty_2 < \lambda^\infty_{2,\, r}$. It was shown in Perera and Tintarev \cite[Theorem 1.3]{PeTi} that if $W \in L^\wop(\R^N)$ and
\[
\pnorm[\wop]{W} < \lambda^\infty_{2,\, r} - \lambda^\infty_2,
\]
then $\lambda_2 < \lambda_{2,\, r}$ and every nodal solution of \eqref{1.1} on $\M$ with $\lambda = \lambda_2$ is nonradial. Combining Theorem \ref{Theorem 1.1} with this result now gives us the following corollary.

\begin{corollary}
Assume that $V \in L^\infty(\R^N)$ is radial and satisfies \eqref{1.2} and \eqref{1.3}, $p \in (2,2^\ast)$, and
\[
W(x) \ge c_0\, e^{- a\, |x|} \quad \forall x \in \R^N
\]
for some constants $0 < a < a_0$ and $c_0 > 0$. If $W \in L^\wop(\R^N)$ and
\[
\pnorm[\wop]{W} < \min \set{(2^\pow - 1)\, \lambda^\infty_1,\lambda^\infty_{2,\, r} - \lambda^\infty_2},
\]
then $\lambda_2 < \lambda_{2,\, r}$ and equation \eqref{1.1} has a nonradial nodal solution on $\M$ for $\lambda = \lambda_2$.
\end{corollary}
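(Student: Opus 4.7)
The plan is simply to combine Theorem~\ref{Theorem 1.1} with Theorem~1.3 of Perera and Tintarev \cite{PeTi}, quoted in the paragraph immediately before the corollary. The hypothesis that $\pnorm[\wop]{W}$ lies strictly below the minimum of $(2^\pow-1)\,\lambda^\infty_1$ and $\lambda^\infty_{2,\, r}-\lambda^\infty_2$ is designed precisely so that both theorems can be invoked at once.

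First I would apply Theorem~\ref{Theorem 1.1}. Radial symmetry of $V$ is not needed there; only $V\in L^\infty(\R^N)$ satisfying \eqref{1.2}--\eqref{1.3}, $p\in(2,2^\ast)$, the pointwise lower bound $W(x)\ge c_0\,e^{-a|x|}$ with $0<a<a_0$, and $W\in L^\wop(\R^N)$ with $\pnorm[\wop]{W}<(2^\pow-1)\,\lambda^\infty_1$ are required. All of these follow from the corollary's assumptions, so Theorem~\ref{Theorem 1.1} produces a sign-changing critical point $u\in\M$ of $\restr{J}{\M}$ with $J(u)=\lambda_2$.

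Next, since $V$ is radial, the remaining half of the $\min$ condition, $\pnorm[\wop]{W}<\lambda^\infty_{2,\, r}-\lambda^\infty_2$, is exactly the hypothesis of \cite[Theorem~1.3]{PeTi}. Invoking that result yields the strict inequality $\lambda_2<\lambda_{2,\, r}$ together with the conclusion that every nodal solution of \eqref{1.1} on $\M$ at level $\lambda_2$ is nonradial. Applying this to the nodal solution $u$ produced in the previous step gives the desired nonradial nodal solution at $\lambda=\lambda_2$.

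There is essentially no technical obstacle here: the content of the corollary is packaged entirely inside the two theorems being combined, and the only thing that needs checking is the routine bookkeeping that the $\min$ in the assumption implies each of the two individual hypotheses. I would expect the final write-up to occupy just a few lines.
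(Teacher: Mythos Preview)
Your proposal is correct and matches the paper's own treatment: the corollary is presented there as an immediate consequence of Theorem~\ref{Theorem 1.1} together with \cite[Theorem~1.3]{PeTi}, with no further argument beyond the observation that the $\min$ condition yields both individual hypotheses.
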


We will give the proof of Theorem \ref{Theorem 1.1} in the next section.

\section{Proof of Theorem \ref{Theorem 1.1}}

Let
\[
\norm{u} = \sqrt{J(u)}, \quad u \in H^1(\R^N).
\]

\begin{lemma}
$\norm{\cdot}$ is an equivalent norm on $H^1(\R^N)$.
\end{lemma}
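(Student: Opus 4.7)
I would prove two-sided bounds between $\|\cdot\|$ and the standard $H^1$-norm $\|u\|_{H^1}^2 = \int |\nabla u|^2 + u^2$.

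For the upper estimate, the hypothesis $V \in L^\infty(\R^N)$ gives at once
\[
J(u) = \int_{\R^N} |\nabla u|^2 + V(x)\, u^2 \le \max(1,\pnorm[\infty]{V})\, \|u\|_{H^1}^2,
\]
so $\norm{u} \le C\, \|u\|_{H^1}$ with $C = \max(1,\pnorm[\infty]{V})^{1/2}$.

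The substantive content is the reverse inequality $\|u\|_{H^1} \le C'\, \norm{u}$. I would argue by contradiction: suppose it fails, so there exists a sequence $u_n \in H^1(\R^N)$ with $\|u_n\|_{H^1} = 1$ and $J(u_n) \to 0$. Since $V \ge 0$ by \eqref{1.2}, both $\int |\nabla u_n|^2 \to 0$ and $\int V\, u_n^2 \to 0$; combined with $\|u_n\|_{H^1} = 1$ this forces $\pnorm[2]{u_n}^2 \to 1$. Weak compactness then gives, along a subsequence, $u_n \wto u$ in $H^1(\R^N)$, and weak lower semicontinuity yields $\int |\nabla u|^2 = 0$. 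Hence $u$ is constant on $\R^N$, but the only constant lying in $H^1(\R^N)$ is $0$, so $u \equiv 0$.

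Now I would extract a contradiction from the limit profile $u \equiv 0$ by splitting space according to \eqref{1.3}. Fix $R > 0$ large enough that $V(x) \ge V^\infty/2$ on $\R^N \setminus B_R$; this is possible since $V^\infty > 0$. On the exterior,
\[
\int_{\R^N \setminus B_R} u_n^2 \le \frac{2}{V^\infty} \int_{\R^N \setminus B_R} V\, u_n^2 \le \frac{2}{V^\infty}\, J(u_n) \to 0.
\]
On the bounded region $B_R$, the Rellich--Kondrachov theorem gives that $u_n \to 0$ strongly in $L^2(B_R)$ (from $u_n \wto 0$ in $H^1$). Adding the two contributions gives $\pnorm[2]{u_n} \to 0$, contradicting $\pnorm[2]{u_n}^2 \to 1$.

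The only step that might look delicate is ruling out mass escaping to infinity, but that is exactly what the positive limit $V^\infty > 0$ in \eqref{1.3} prevents; every other step is a standard application of weak compactness and Rellich's theorem, and the argument works uniformly for $N \ge 2$ since no Sobolev exponent enters.
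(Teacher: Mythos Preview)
Your proof is correct and follows essentially the same approach as the paper: the upper bound from $V\in L^\infty$, then a contradiction argument for the lower bound via a normalized sequence with $J(u_n)\to 0$, weak limit identified as $0$ since $\nabla u=0$ forces $u$ constant, and the $L^2$ mass split into an exterior part (killed by $V\ge V^\infty/2$ outside $B_R$) and an interior part (killed by Rellich). The only cosmetic difference is the choice of normalization---you take $\|u_n\|_{H^1}=1$ and deduce $\pnorm[2]{u_n}\to 1$, while the paper fixes $\pnorm[2]{u_k}=1$ directly---but the logic is identical.
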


\begin{proof}
Since $V \in L^\infty(\R^N)$, $\norm{u} \le C \norm[H^1(\R^N)]{u}$ for all $u \in H^1(\R^N)$ for some constant $C > 0$. If the reverse inequality does not hold for any $C > 0$, then there exists a sequence $u_k \in H^1(\R^N)$ such that
\begin{gather}
\pnorm[2]{u_k} = 1, \label{2.4}\\[10pt]
\pnorm[2]{\nabla u_k} \to 0, \label{2.5}\\[7.5pt]
\int_{\R^N} V(x)\, u_k^2 \to 0. \label{2.6}
\end{gather}
By \eqref{1.3}, there exists $R > 0$ such that
\[
V(x) \ge \frac{V^\infty}{2} > 0 \quad \forall x \in \R^N \setminus B_R,
\]
where $B_R = \set{x \in \R^N : |x| < R}$. Then $\pnorm[L^2(\R^N \setminus B_R)]{u_k} \to 0$ by \eqref{2.6} and \eqref{1.2}. By \eqref{2.4} and \eqref{2.5}, $u_k$ is bounded in $H^1(\R^N)$ and hence converges weakly in $H^1(\R^N)$ to some $w$ for a renamed subsequence. By the weak lower semicontinuity of the gradient seminorm, then $\pnorm[2]{\nabla w} = 0$, so $w$ is a constant function. This constant is necessarily zero since $w \in H^1(\R^N)$. Consequently, $\pnorm[L^2(B_R)]{u_k} \to 0$ by the compactness of the imbedding $H^1(\R^N) \hookrightarrow L^2(B_R)$. Thus, $\pnorm[2]{u_k} \to 0$, contradicting \eqref{2.4}.
\end{proof}

For $u \in H^1(\R^N)$, let
\[
K_u(v) = \int_{\R^N} |u(x)|^{p-2}\, v^2, \quad v \in H^1(\R^N).
\]

\begin{lemma} \label{Lemma 1}
The map $H^1(\R^N) \times H^1(\R^N) \to \R,\, (u,v) \mapsto K_u(v)$ is continuous with respect to norm convergence in $u$ and weak convergence in $v$, i.e., $K_{u_k}(v_k) \to K_u(v)$ whenever $u_k \to u$ in $H^1(\R^N)$ and $v_k \wto v$ in $H^1(\R^N)$.
\end{lemma}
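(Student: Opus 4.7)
The plan is to split
\[
K_{u_k}(v_k) - K_u(v) = \int_{\R^N} \bigl(|u_k|^{p-2} - |u|^{p-2}\bigr) v_k^2 + \int_{\R^N} |u|^{p-2}\, \bigl(v_k^2 - v^2\bigr)
\]
and show that each summand tends to zero as $k \to \infty$, working throughout with the Sobolev embedding $H^1(\R^N) \hookrightarrow L^p(\R^N)$ and the conjugate H\"older pair $(\wop, p/2)$ (note that $(p-2)/p + 2/p = 1$).

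For the first summand, $u_k \to u$ in $H^1$ yields $u_k \to u$ in $L^p$ via Sobolev. A standard subsequence argument (every subsequence admits a further one that is a.e.\ convergent and dominated by some $h \in L^p$, whence $|u_k|^{p-2}$ is a.e.\ dominated by $h^{p-2}$, and $\int (h^{p-2})^\wop = \int h^p < \infty$) combined with dominated convergence gives $|u_k|^{p-2} \to |u|^{p-2}$ in $L^\wop(\R^N)$. Since $\set{v_k}$ is bounded in $H^1$ and hence in $L^p$, H\"older's inequality bounds the first summand in absolute value by $\pnorm[\wop]{|u_k|^{p-2} - |u|^{p-2}}\, \pnorm[p]{v_k}^2$, which tends to $0$.

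The second summand is the crux of the lemma: weak $H^1$-convergence of $v_k$ does not imply strong $L^p$-convergence on all of $\R^N$, only on bounded subdomains. The remedy is to exploit the $L^p$-integrability of $u$ at infinity. Given $\eps > 0$, since $u \in L^p(\R^N)$ one may fix $R > 0$ so large that $\int_{\R^N \setminus B_R} |u|^p < \eps$; H\"older then bounds the integral over $\R^N \setminus B_R$ by $\eps^\pow \bigl(\pnorm[p]{v_k}^2 + \pnorm[p]{v}^2\bigr) \le C\eps^\pow$ uniformly in $k$. On $B_R$, the compact Rellich--Kondrachov embedding $H^1(B_R) \hookrightarrow L^p(B_R)$ gives $v_k \to v$ strongly in $L^p(B_R)$, and the factorization $v_k^2 - v^2 = (v_k - v)(v_k + v)$ together with Cauchy--Schwarz promotes this to $v_k^2 \to v^2$ in $L^{p/2}(B_R)$. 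A final H\"older estimate shows that the $B_R$-contribution vanishes as $k \to \infty$, and since $\eps > 0$ was arbitrary the second summand tends to zero.
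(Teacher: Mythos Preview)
Your proof is correct. The paper organizes the argument differently: instead of splitting the integrand into $(|u_k|^{p-2}-|u|^{p-2})\,v_k^2$ and $|u|^{p-2}(v_k^2-v^2)$, it splits the \emph{domain} into $B_R$ and its complement from the start and treats $|u_k|^{p-2}v_k^2$ as a whole on each piece. On $\R^N\setminus B_R$ the tail of $u_k$ (and of $u$) in $L^p$ is small; on $B_R$ the paper passes to a subsequence converging a.e.\ and invokes the pointwise Young-type bound $|a|^{p-2}b^2\le (1-2/p)|a|^p+(2/p)|b|^p$ together with dominated convergence. Your decomposition has the advantage of cleanly separating the two hypotheses: the strong convergence of $u$ is used once and for all to get $|u_k|^{p-2}\to|u|^{p-2}$ in $L^{\wop}(\R^N)$ globally, and the ball/complement localization is needed only to handle the weak convergence of $v$; no subsequence extraction is required in the main line of the argument. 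The paper's route is marginally more self-contained in that it avoids the continuity of the Nemytskii operator. One small quibble: the inequality $\|(v_k-v)(v_k+v)\|_{L^{p/2}}\le\|v_k-v\|_{L^p}\|v_k+v\|_{L^p}$ is H\"older rather than Cauchy--Schwarz (they coincide only when $p=4$).
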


\begin{proof}
It suffices to show that $K_{u_k}(v_k) \to K_u(v)$ for a renamed subsequence of $(u_k,v_k)$. By the continuity of the Sobolev imbedding $H^1(\R^N) \hookrightarrow L^p(\R^N)$, $u_k \to u$ in $L^p(\R^N)$ and $v_k$ is bounded in $L^p(\R^N)$. Then
\[
\int_{\R^N \setminus B_R} |u_k(x)|^{p-2}\, v_k^2 + |u(x)|^{p-2}\, v^2 \le \pnorm[L^p(\R^N \setminus B_R)]{u_k}^{p-2} \pnorm[p]{v_k}^2 + \pnorm[L^p(\R^N \setminus B_R)]{u}^{p-2} \pnorm[p]{v}^2
\]
by the H\"{o}lder inequality and the right-hand side can be made arbitrarily small by taking $R > 0$ and $k$ sufficiently large, where $B_R = \set{x \in \R^N : |x| < R}$. By the compactness of the imbedding $H^1(B_R) \hookrightarrow L^p(B_R)$, $(u_k,v_k) \to (u,v)$ strongly in $L^p(B_R) \times L^p(B_R)$ and a.e.\! in $B_R \times B_R$ for a renamed subsequence. Then
\[
\int_{B_R} |u_k(x)|^{p-2}\, v_k^2 \to \int_{B_R} |u(x)|^{p-2}\, v^2
\]
by the elementary inequality
\[
|a|^{p-2}\, b^2 \le \left(1 - \frac{2}{p}\right) |a|^p + \frac{2}{p}\, |b|^p \quad \forall a, b \in \R
\]
and the dominated convergence theorem. Thus, the conclusion follows.
\end{proof}

For $u \in H^1(\R^N) \setminus \set{0}$, let
\[
\M_u = \bgset{v \in H^1(\R^N) : K_u(v) = 1}.
\]

\begin{lemma} \label{Lemma 2.1}
For $u \in H^1(\R^N) \setminus \set{0}$, the infimum
\begin{equation} \label{mu_1}
\mu_1(u) := \inf_{v \in \M_u}\, J(v)
\end{equation}
is attained at a unique $v_1(u) > 0$, and the even map $H^1(\R^N) \setminus \set{0} \to H^1(\R^N),\, u \mapsto v_1(u)$ is continuous.
\end{lemma}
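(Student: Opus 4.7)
The plan is to apply the direct method for existence, combine the strong maximum principle with a hidden-convexity argument for positivity and uniqueness, and then leverage Lemma \ref{Lemma 1} together with uniqueness to obtain continuity. Evenness comes for free from $|{-u}|^{p-2} = |u|^{p-2}$, which gives $\M_{-u} = \M_u$ and hence $v_1(-u) = v_1(u)$. For existence I would take a minimizing sequence $v_k \in \M_u$; since $\sqrt{J(\cdot)}$ is an equivalent norm on $H^1(\R^N)$, the sequence is bounded and admits a weak subsequential limit $v$. Weak lower semicontinuity of $J$ gives $J(v) \le \mu_1(u)$, while Lemma \ref{Lemma 1} applied with the constant sequence $u_k \equiv u$ gives $K_u(v_k) \to K_u(v) = 1$, so $v \in \M_u$ and $J(v) = \mu_1(u)$.

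For positivity, replacing $v$ by $|v|$ preserves both $J$ and $K_u$, so the minimizer may be taken nonnegative; the Euler-Lagrange equation $-\Delta v + V(x)\, v = \mu_1(u)\, |u|^{p-2}\, v$, together with the strong maximum principle for $H^1$ solutions having an $L^{p/(p-2)}_{\mathrm{loc}}$ zero-order coefficient (note that $p/(p-2) > N/2$ since $p < 2^\ast$), then forces $v > 0$ on all of $\R^N$. For uniqueness, given two positive minimizers $v_1, v_2$, I would set $w_\theta = \sqrt{(1-\theta)\, v_1^2 + \theta\, v_2^2}$ and use the pointwise Cauchy-Schwarz bound
\[
|\nabla w_\theta|^2 \le (1-\theta)\, |\nabla v_1|^2 + \theta\, |\nabla v_2|^2,
\]
with equality only where $\nabla v_1/v_1 = \nabla v_2/v_2$; since the potential term $\int V\, w_\theta^2$ and the weight term $K_u(w_\theta)$ are linear in $v^2$, this yields $w_\theta \in \M_u$ with $J(w_\theta) \le \mu_1(u)$. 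Equality must therefore hold pointwise, so $v_1 \equiv c\, v_2$, and $K_u$-normalization forces $c = 1$.

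For continuity, take $u_k \to u$ in $H^1(\R^N) \setminus \set{0}$ and write $v_k = v_1(u_k)$. Testing with the rescaling $t_k\, v_1(u)$, where $t_k = K_{u_k}(v_1(u))^{-1/2} \to 1$ by Lemma \ref{Lemma 1}, gives $\limsup \mu_1(u_k) \le \mu_1(u)$, so $v_k$ is bounded in $H^1(\R^N)$ and extracts a weak subsequential limit $v^\ast \ge 0$. A second application of Lemma \ref{Lemma 1} gives $v^\ast \in \M_u$, and so
\[
\mu_1(u) \le J(v^\ast) \le \liminf J(v_k) = \liminf \mu_1(u_k) \le \mu_1(u);
\]
uniqueness identifies $v^\ast = v_1(u)$. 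Thus $\mu_1(u_k) \to \mu_1(u)$ and $\norm{v_k} \to \norm{v^\ast}$; in the Hilbert space $H^1(\R^N)$, weak convergence plus norm convergence is strong convergence, and uniqueness of the limit promotes subsequential strong convergence to convergence of the full sequence. The main obstacle will be the uniqueness step: the hidden-convexity substitution requires strict positivity of the minimizers, which in turn requires a careful application of the strong maximum principle to an equation whose zero-order weight $|u|^{p-2}$ is only locally in $L^{p/(p-2)}$ rather than bounded.
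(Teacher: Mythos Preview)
Your proof is correct, and for existence, positivity, and continuity it is essentially identical to the paper's argument. The one genuine difference is in the uniqueness step: the paper invokes the Jacobi (Picone-type) identity
\[
J(v) - \mu_1(u)\, K_u(v) = \int_{\R^N} v_1(u)^2\, \left|\nabla \left(\frac{v}{v_1(u)}\right)\right|^2,
\]
which, once a single positive minimizer $v_1(u)$ is in hand, immediately forces any other $v \in \M_u$ with $J(v) = \mu_1(u)$ to satisfy $v/v_1(u) = \text{const}$. Your hidden-convexity interpolation $w_\theta = \sqrt{(1-\theta)\, v_1^2 + \theta\, v_2^2}$ is the standard alternative: it is symmetric in the two candidate minimizers and avoids dividing by $v_1(u)$, at the price of needing strict positivity of \emph{both} before the interpolation can be formed. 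The two arguments encode the same convexity of the Dirichlet integral in $v^2$ and deliver the same conclusion with comparable effort.
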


\begin{proof}
The functional $K_u$ is weakly continuous on $H^1(\R^N)$ by Lemma \ref{Lemma 1} and $J$ is weakly lower semicontinuous, so the infimum in \eqref{mu_1} is attained at some $v_1(u) \in \M_u$. By the strong maximum principle, $v_1(u) > 0$. Then the right-hand side of the well-known Jacobi identity
\[
J(v) - \mu_1(u)\, K_u(v) = \int_{\R^N} v_1(u)^2\, \left|\nabla \left(\frac{v}{v_1(u)}\right)\right|^2
\]
vanishes at $v \in \M_u$ if and only if $v = v_1(u)$, so the minimizer is unique.

Let $u_k \to u \ne 0$ in $H^1(\R^N)$. By Lemma \ref{Lemma 1}, $K_{u_k}(v_1(u)) \to K_u(v_1(u)) = 1$, so for sufficiently large $k$, $K_{u_k}(v_1(u)) > 0$ and $v_1(u)/\sqrt{K_{u_k}(v_1(u))} \in \M_{u_k}$. Then
\[
J(v_1(u_k)) \le J\left(\frac{v_1(u)}{\sqrt{K_{u_k}(v_1(u))}}\right) = \frac{\mu_1(u)}{K_{u_k}(v_1(u))} \to \mu_1(u),
\]
so
\begin{equation} \label{1}
\limsup J(v_1(u_k)) \le \mu_1(u).
\end{equation}
In particular, $v_1(u_k)$ is bounded in $H^1(\R^N)$ and hence converges weakly in $H^1(\R^N)$ to some $v$ for a renamed subsequence. Then $1 = K_{u_k}(v_1(u_k)) \to K_u(v)$ by Lemma \ref{Lemma 1}, so $K_u(v) = 1$ and hence $v \in \M_u$. Since $J$ is weakly lower semicontinuous, then
\begin{equation} \label{2}
\mu_1(u) \le J(v) \le \liminf J(v_1(u_k)).
\end{equation}
Combining \eqref{1} and \eqref{2} gives $\lim J(v_1(u_k)) = J(v) = \mu_1(u)$, so $\norm{v_1(u_k)} \to \norm{v}$ and $v = v_1(u)$ by the uniqueness of the minimizer. Since $v_1(u_k) \wto v$, then $v_1(u_k) \to v_1(u)$.
\end{proof}

For $u \in H^1(\R^N) \setminus \set{0}$, let
\[
L_u(v) = \int_{\R^N} |u(x)|^{p-2}\, v_1(u)\, v, \quad v \in H^1(\R^N)
\]
and let
\[
\CN_u = \bgset{v \in \M_u : L_u(v) = 0}.
\]

\begin{lemma}
For $u \in H^1(\R^N) \setminus \set{0}$, the infimum
\begin{equation} \label{mu_2}
\mu_2(u) := \inf_{v \in \CN_u}\, J(v)
\end{equation}
is attained at some $v_2(u)$.
\end{lemma}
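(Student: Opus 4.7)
The plan is to apply the direct method of the calculus of variations, following the pattern established in Lemma \ref{Lemma 2.1}. First I would verify that the problem is well-posed: $\CN_u$ is nonempty (one can produce an element by combining two functions with disjoint support contained in $\set{u \ne 0}$, say $v = a\phi_1 + b\phi_2$, choosing $a,b$ to make $L_u(v) = 0$ and then rescaling so that $K_u(v) = 1$), and $\mu_2(u) \ge \mu_1(u) > 0$ since $\CN_u \subset \M_u$. Hence $\mu_2(u)$ is a finite positive number.

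Next I would take a minimizing sequence $v_k \in \CN_u$ with $J(v_k) \to \mu_2(u)$. Since $\norm{v_k}^2 = J(v_k)$ is bounded and $\norm{\cdot}$ is equivalent to the standard $H^1(\R^N)$ norm, $\set{v_k}$ is bounded in $H^1(\R^N)$, so a renamed subsequence converges weakly to some $v \in H^1(\R^N)$. The heart of the argument is to show that both constraints defining $\CN_u$ survive the passage to the weak limit. For $K_u$ this is immediate from Lemma \ref{Lemma 1}: $K_u(v_k) \to K_u(v)$, so $K_u(v) = 1$. For $L_u$ I would observe that by H\"older's inequality with exponents $p/(p-2),\, p,\, p$,
\[
|L_u(w)| \le \pnorm[p]{u}^{p-2}\, \pnorm[p]{v_1(u)}\, \pnorm[p]{w} \quad \forall w \in H^1(\R^N),
\]
so $L_u$ is a bounded linear functional on $L^p(\R^N)$, and hence on $H^1(\R^N)$ by the continuity of the Sobolev imbedding. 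Bounded linear functionals on Banach spaces are weakly continuous, so $L_u(v_k) \to L_u(v)$, which forces $L_u(v) = 0$. Therefore $v \in \CN_u$.

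Finally, by the weak lower semicontinuity of $J$ (being the square of the equivalent norm $\norm{\cdot}$),
\[
J(v) \le \liminf J(v_k) = \mu_2(u),
\]
and since $v \in \CN_u$ the reverse inequality holds by definition of the infimum. Thus $J(v) = \mu_2(u)$, and setting $v_2(u) := v$ completes the proof. The only nontrivial step is the weak continuity of $L_u$, and as indicated above this reduces to a one-line H\"older estimate; uniqueness of the minimizer is not asserted (and in general need not hold because of the sign indeterminacy in the orthogonality constraint), so the argument stops at existence.
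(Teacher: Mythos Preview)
Your proof is correct and follows exactly the same approach as the paper: weak continuity of $K_u$ from Lemma~\ref{Lemma 1}, weak continuity of $L_u$ because it is a bounded linear functional on $H^1(\R^N)$, and weak lower semicontinuity of $J$, applied to a minimizing sequence via the direct method. The paper's own proof is a one-sentence compression of precisely these ingredients; you have additionally supplied the nonemptiness of $\CN_u$, which the paper leaves implicit.
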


\begin{proof}
The functional $K_u$ is weakly continuous on $H^1(\R^N)$ by Lemma \ref{Lemma 1}, $L_u$ is a bounded linear functional on $H^1(\R^N)$, and $J$ is weakly lower semicontinuous, so the infimum in \eqref{mu_2} is attained at some $v_2(u) \in \CN_u$.
\end{proof}

\begin{lemma} \label{Lemma 2.3}
For $u \in H^1(\R^N) \setminus \set{0}$, $v_1(u)$ and $v_2(u)$ are linearly independent and
\[
J(v) \le \mu_2(u) \int_{\R^N} |u(x)|^{p-2}\, v^2 \quad \forall v \in \spn \set{v_1(u),v_2(u)}.
\]
\end{lemma}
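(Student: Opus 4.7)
The plan is to exploit the variational characterizations of $v_1(u)$ and $v_2(u)$ as eigenfunctions of the weighted linear eigenvalue problem $-\Delta v + V(x)\, v = \mu\, |u|^{p-2}\, v$, use Lagrange multipliers to show they are orthogonal in both the $J$ inner product and the weighted $L^2$ inner product $K_u(\cdot,\cdot)$, and then compute on the two-dimensional span.

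First I would show linear independence. Suppose $v_2(u) = c\, v_1(u)$ for some $c \in \R$. Then
\[
0 = L_u(v_2(u)) = c \int_{\R^N} |u(x)|^{p-2}\, v_1(u)^2 = c\, K_u(v_1(u)) = c,
\]
forcing $v_2(u) = 0$, which contradicts $K_u(v_2(u)) = 1$.

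Next I would derive the Euler--Lagrange equations. Since $v_1(u)$ minimizes $J$ on $\M_u$, a standard Lagrange multiplier computation (testing with $v_1(u)$ itself) shows
\[
\int_{\R^N} \nabla v_1(u) \cdot \nabla w + V(x)\, v_1(u)\, w = \mu_1(u) \int_{\R^N} |u(x)|^{p-2}\, v_1(u)\, w
\]
for every $w \in H^1(\R^N)$. Since $v_2(u)$ minimizes $J$ subject to the two constraints $K_u(v) = 1$ and $L_u(v) = 0$, there exist multipliers $\mu, \nu$ with
\[
\int_{\R^N} \nabla v_2(u) \cdot \nabla w + V(x)\, v_2(u)\, w = \mu \int_{\R^N} |u(x)|^{p-2}\, v_2(u)\, w + \frac{\nu}{2} \int_{\R^N} |u(x)|^{p-2}\, v_1(u)\, w.
\]
Testing with $w = v_2(u)$ and using the constraints gives $\mu = \mu_2(u)$. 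Testing with $w = v_1(u)$, the left-hand side equals $\mu_1(u) \int |u|^{p-2}\, v_1(u)\, v_2(u) = \mu_1(u)\, L_u(v_2(u)) = 0$ by the equation for $v_1(u)$, the right-hand side reduces to $\nu/2$, and we conclude $\nu = 0$. Therefore $v_2(u)$ is itself an eigenfunction, and the $J$ and $K_u$ cross terms of $v_1(u)$ and $v_2(u)$ both vanish.

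Finally, for $v = \alpha\, v_1(u) + \beta\, v_2(u) \in \spn \set{v_1(u), v_2(u)}$, orthogonality gives
\[
J(v) = \alpha^2\, \mu_1(u) + \beta^2\, \mu_2(u), \qquad \int_{\R^N} |u(x)|^{p-2}\, v^2 = \alpha^2 + \beta^2,
\]
and since $\CN_u \subset \M_u$ forces $\mu_1(u) \le \mu_2(u)$, the desired inequality follows immediately. The main conceptual point (and the only step requiring genuine care) is verifying that the second Lagrange multiplier $\nu$ vanishes, which hinges on the first equation yielding $J$-orthogonality of $v_1(u)$ and $v_2(u)$; everything else is then linear algebra on a two-dimensional subspace.
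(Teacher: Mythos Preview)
Your proof is correct and follows essentially the same route as the paper's: both use the Euler--Lagrange equation for $v_1(u)$ tested against $v_2(u)$, together with the constraint $L_u(v_2(u)) = 0$, to obtain orthogonality in both the $J$ and $K_u$ inner products, after which the inequality is linear algebra on the two-dimensional span. Your derivation of the Euler--Lagrange equation for $v_2(u)$ and the verification that $\nu = 0$ are, however, unnecessary here---the paper simply observes that $L_u(v_2(u)) = 0$ \emph{is} the $K_u$-orthogonality, and $J$-orthogonality then follows from $v_1(u)$'s equation alone, without ever writing an equation for $v_2(u)$.
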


\begin{proof}
Since $L_u(v_2) = 0$ and $K_u(v_1) = 1$, $v_1$ and $v_2$ are linearly independent. We have
\[
\int_{\R^N} \nabla v_1 \cdot \nabla w + V(x)\, v_1 w = \mu_1(u) \int_{\R^N} |u(x)|^{p-2}\, v_1 w \quad \forall w \in H^1(\R^N),
\]
and testing with $v_2$ gives
\[
\int_{\R^N} \nabla v_1 \cdot \nabla v_2 + V(x)\, v_1 v_2 = \mu_1(u) \int_{\R^N} |u(x)|^{p-2}\, v_1 v_2 = 0.
\]
Then
\begin{multline*}
J(c_1 v_1 + c_2 v_2) = c_1^2\, J(v_1) + c_2^2\, J(v_2) = c_1^2\, \mu_1(u) + c_2^2\, \mu_2(u)\\[7.5pt]
\le (c^1_2 + c_2^2)\, \mu_2(u) = \mu_2(u) \int_{\R^N} |u(x)|^{p-2}\, (c_1 v_1 + c_2 v_2)^2. \QED
\end{multline*}
\end{proof}

Let
\[
h(u) = \int_{\R^N} |u|^{p-2}\, u\, v_1(u), \quad u \in H^1(\R^N) \setminus \set{0}
\]
and let
\[
F = \bgset{u \in \M : h(u) = 0}.
\]
By Lemma \ref{Lemma 2.1}, $h$ is continuous and hence $F$ is closed.

\begin{lemma} \label{Lemma 3.1}
$\gamma(S^1) \cap F \ne \emptyset$ for all $\gamma \in \Gamma_2$.
\end{lemma}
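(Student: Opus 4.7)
The plan is to apply a Borsuk-Ulam type argument to the composition $h \circ \gamma : S^1 \to \R$. The key observation is that $h$ is odd on $H^1(\R^N) \setminus \set{0}$. Indeed, since $|{-u}|^{p-2} = |u|^{p-2}$, the functional $K_{-u}$ coincides with $K_u$, and therefore $\M_{-u} = \M_u$ and the constrained minimization problem \eqref{mu_1} is the same for $u$ and $-u$. By the uniqueness clause of Lemma \ref{Lemma 2.1}, the positive minimizer satisfies $v_1(-u) = v_1(u)$. Substituting into the definition of $h$ gives
\[
h(-u) = \int_{\R^N} |{-u}|^{p-2}\, (-u)\, v_1(-u) = -\int_{\R^N} |u|^{p-2}\, u\, v_1(u) = -h(u),
\]
so $h$ is odd.

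Having established oddness, I would let $\gamma \in \Gamma_2$ and form $\varphi := h \circ \gamma : S^1 \to \R$. Continuity of $v_1$ from Lemma \ref{Lemma 2.1} together with continuity of $u \mapsto |u|^{p-2}\, u$ from $L^p(\R^N)$ into $L^{p/(p-1)}(\R^N)$ implies that $h$ is continuous on $\M$, so $\varphi$ is continuous; it is odd because both $h$ and $\gamma$ are odd. The final step is purely topological: an odd continuous real-valued function on $S^1$ must vanish. Explicitly, pick any $\theta_0 \in S^1$; if $\varphi(\theta_0) = 0$ we are done, otherwise $\varphi(\theta_0)$ and $\varphi(-\theta_0) = -\varphi(\theta_0)$ have opposite signs, and the intermediate value theorem applied along either of the two arcs joining $\theta_0$ to $-\theta_0$ yields a $\theta^\ast$ with $\varphi(\theta^\ast) = 0$. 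Then $\gamma(\theta^\ast) \in \gamma(S^1) \cap F$.

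The only nonroutine point is the oddness of $h$, and as noted this reduces immediately to the uniqueness of the positive minimizer $v_1(u)$ already proved in Lemma \ref{Lemma 2.1}; everything else is continuity bookkeeping plus the one-dimensional Borsuk-Ulam principle on $S^1$, so I do not anticipate any real obstacle.
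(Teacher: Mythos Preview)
Your proposal is correct and follows essentially the same route as the paper: the paper's proof simply notes that $h\circ\gamma:S^1\to\R$ is odd and continuous (citing Lemma~\ref{Lemma 2.1}, which already records that $u\mapsto v_1(u)$ is even and continuous) and then invokes the intermediate value theorem. Your write-up just unpacks the oddness of $h$ and the Borsuk--Ulam step in a bit more detail, but there is no substantive difference.
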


\begin{proof}
Since $h \circ \gamma : S^1 \to \R$ is an odd continuous map by Lemma \ref{Lemma 2.1}, $h(u) = 0$ for some $u \in \gamma(S^1)$ by the intermediate value theorem.
\end{proof}

\begin{lemma} \label{Lemma 3.2}
$J(u) \ge \lambda_2$ for all $u \in F$.
\end{lemma}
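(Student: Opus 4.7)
Given $u \in F$, my goal is to produce an admissible path $\gamma \in \Gamma_2$ whose maximum of $J$ is at most $J(u)$, which then yields $\lambda_2 \le J(u)$ by the very definition of $\lambda_2$. The natural candidate is a circle inside the two-dimensional space $\spn \set{v_1(u),v_2(u)}$ identified by Lemma \ref{Lemma 2.3}, renormalized so that it lies on $\M$.

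More concretely, set $V_u = \spn \set{v_1(u),v_2(u)}$, which is $2$-dimensional because of the linear independence asserted in Lemma \ref{Lemma 2.3}. Parametrize a circle in $V_u$ by
\[
v(\theta) = \cos \theta \, v_1(u) + \sin \theta \, v_2(u), \quad \theta \in S^1,
\]
and define $\gamma(\theta) = v(\theta)/\pnorm[p]{v(\theta)}$. Since $v(\theta) \ne 0$ for all $\theta$, $\gamma$ is a continuous map $S^1 \to \M$, and $v(\theta + \pi) = -v(\theta)$ makes $\gamma$ odd, so $\gamma \in \Gamma_2$.

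Next, I estimate $J(\gamma(\theta))$. By Lemma \ref{Lemma 2.3},
\[
J(v(\theta)) \le \mu_2(u) \int_{\R^N} |u(x)|^{p-2}\, v(\theta)^2,
\]
and H\"older's inequality together with $u \in \M$ gives
\[
\int_{\R^N} |u(x)|^{p-2}\, v(\theta)^2 \le \pnorm[p]{u}^{p-2}\, \pnorm[p]{v(\theta)}^2 = \pnorm[p]{v(\theta)}^2.
\]
Dividing by $\pnorm[p]{v(\theta)}^2$ yields $J(\gamma(\theta)) \le \mu_2(u)$ for every $\theta$, so $\lambda_2 \le \max_{\theta \in S^1} J(\gamma(\theta)) \le \mu_2(u)$.

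It remains to observe that $u$ itself lies in the admissible set $\CN_u$ of the minimization \eqref{mu_2}: indeed, $K_u(u) = \int |u|^p = 1$ since $u \in \M$, and $L_u(u) = h(u) = 0$ since $u \in F$. Hence $\mu_2(u) \le J(u)$, and combining with the previous inequality gives $\lambda_2 \le J(u)$. No step here is really an obstacle, since Lemmas \ref{Lemma 2.1}--\ref{Lemma 2.3} already do the heavy lifting; the only mild point is to recognize that the dual condition $h(u) = 0$ is exactly what places $u$ in $\CN_u$, which is what links $J(u)$ to $\mu_2(u)$ and hence to $\lambda_2$.
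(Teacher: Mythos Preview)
Your proof is correct and follows essentially the same route as the paper: both show $u \in \CN_u$ to get $\mu_2(u) \le J(u)$, construct the odd map $\gamma(\theta) = (\cos\theta\, v_1(u) + \sin\theta\, v_2(u))/\pnorm[p]{\cdot}$ into $\M$, and use Lemma~\ref{Lemma 2.3} together with H\"older to bound $\max J \circ \gamma \le \mu_2(u)$, yielding $\lambda_2 \le \mu_2(u) \le J(u)$. The only difference is the order of presentation.
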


\begin{proof}
Since $K_u(u) = I(u) = 1$ and $L_u(u) = h(u) = 0$, $u \in \CN_u$ and hence
\begin{equation} \label{2.1}
J(u) \ge \mu_2(u).
\end{equation}
By Lemma \ref{Lemma 2.3}, $v_1(u)$ and $v_2(u)$ are linearly independent and hence we can define an odd continuous map $\gamma_u : S^1 \to \M \cap \spn \set{v_1(u),v_2(u)}$ by
\[
\gamma_u(e^{i \theta}) = \frac{v_1(u)\, \cos \theta + v_2(u)\, \sin \theta}{\pnorm{v_1(u)\, \cos \theta + v_2(u)\, \sin \theta}}, \quad \theta \in [0,2 \pi].
\]
Take $v_0 \in \gamma_u(S^1)$ such that
\begin{equation} \label{2.2}
J(v_0) = \max_{v \in \gamma_u(S^1)}\, J(v) \ge \lambda_2.
\end{equation}
By Lemma \ref{Lemma 2.3} and the H\"older inequality,
\begin{equation} \label{2.3}
J(v_0) \le \mu_2(u) \pnorm[p]{u}^{p-2} \pnorm[p]{v_0}^2 = \mu_2(u).
\end{equation}
Combining \eqref{2.1}--\eqref{2.3} gives $J(u) \ge \lambda_2$.
\end{proof}

It was shown in Perera and Tintarev \cite[Proposition 3.1]{PeTi} that
\[
\lambda^\infty_1 < \lambda_2 < \big(\lambda_1^\wop + (\lambda_1^\infty)^\wop\big)^\pow
\]
under the hypotheses of Theorem \ref{Theorem 1.1}, so $\restr{J}{\M}$ satisfies the \PS{} condition at the level $\lambda_2$ (see, e.g., Cerami \cite{MR2278729}). Let $K^{\lambda_2}$ denote the set of critical points of $\restr{J}{\M}$ at this level.

\begin{lemma} \label{Lemma 3.3}
$K^{\lambda_2} \cap F \ne \emptyset$
\end{lemma}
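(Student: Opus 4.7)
I would argue by contradiction, treating $F$ as a dual set linking with the minimax class $\Gamma_2$. Assume $K^{\lambda_2} \cap F = \emptyset$. First I would record the symmetries needed for an equivariant deformation argument: since $I$ and $J$ are even, $\M$ is invariant under $u \mapsto -u$ and $\restr{J}{\M}$ is even; and by Lemma \ref{Lemma 2.1} the map $u \mapsto v_1(u)$ is even, so $h(-u) = -h(u)$ and $F$ is symmetric. Consequently $K^{\lambda_2}$ is symmetric, and by the \PS{\lambda_2}\ condition (cited just before the lemma) it is compact. Since $F$ is closed and disjoint from $K^{\lambda_2}$, I can choose symmetric open neighborhoods $V_1 \subset \overline{V_1} \subset V_2$ of $K^{\lambda_2}$ in $\M$ with $\overline{V_2} \cap F = \emptyset$.

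Next I would invoke the standard equivariant deformation lemma on the $C^2$ Hilbert submanifold $\M$ (in the form given, e.g., in Rabinowitz or Struwe, with an odd pseudogradient flow available because $\restr{J}{\M}$ is even): there exist $\delta > 0$ and an odd continuous map $\eta : \M \to \M$ such that
\[
\eta\bigl(\set{u \in \M : J(u) \le \lambda_2 + \delta} \setminus V_1\bigr) \subset \set{u \in \M : J(u) \le \lambda_2 - \delta} \quad \text{and} \quad \eta(V_1) \subset V_2,
\]
the latter inclusion obtained by shrinking $V_1$ inside $V_2$ and bounding the integration time of the flow in the construction.

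With $\eta$ in hand, by the definition of $\lambda_2$ I can pick $\gamma \in \Gamma_2$ with $\max_{u \in \gamma(S^1)} J(u) < \lambda_2 + \delta$. Then $\eta \circ \gamma : S^1 \to \M$ is odd and continuous, so $\eta \circ \gamma \in \Gamma_2$, and Lemma \ref{Lemma 3.1} yields $z \in S^1$ with $\eta(\gamma(z)) \in F$. If $\gamma(z) \notin V_1$, the first inclusion above gives $J(\eta(\gamma(z))) \le \lambda_2 - \delta$, contradicting Lemma \ref{Lemma 3.2}. If $\gamma(z) \in V_1$, the second inclusion places $\eta(\gamma(z))$ in $V_2$, contradicting $\overline{V_2} \cap F = \emptyset$. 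Either alternative is impossible, so $K^{\lambda_2} \cap F \ne \emptyset$.

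The only step I expect to be nonroutine is the simultaneous arrangement of both conclusions of the deformation lemma on the manifold $\M$; both are classical, the first requiring only that $\M$ is a $C^2$ Hilbert submanifold on which \PS{\lambda_2}\ holds, and the second a standard shrinkage of $V_1$. The structural content of the argument is entirely the dual-set/linking property encoded in Lemmas \ref{Lemma 3.1} and \ref{Lemma 3.2}, namely that $\gamma(S^1)$ meets $F$ for every $\gamma \in \Gamma_2$ while $F$ sits in the sublevel $\set{J \ge \lambda_2}$.
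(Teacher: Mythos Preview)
Your argument is correct and is essentially the paper's own proof: a contradiction via an equivariant first deformation lemma on $\M$, combined with Lemmas \ref{Lemma 3.1} and \ref{Lemma 3.2}. The paper packages the deformation slightly more compactly as the single inclusion $\eta(J^{\lambda_2+\varepsilon}) \subset J^{\lambda_2-\varepsilon} \cup N_\delta(K^{\lambda_2})$ (citing Corvellec--Degiovanni--Marzocchi), which lets one conclude $\eta(J^{\lambda_2+\varepsilon}) \cap F = \emptyset$ directly and avoids your case split on whether $\gamma(z)\in V_1$, but the content is identical.
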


\begin{proof}
Suppose $K^{\lambda_2} \cap F = \emptyset$. Since $K^{\lambda_2}$ is compact by the \PS{} condition and $F$ is closed, there is a $\delta > 0$ such that the $\delta$-neighborhood $N_\delta(K^{\lambda_2}) = \set{u \in \M : \dist (u,K^{\lambda_2}) \le \delta}$ does not intersect $F$. By the first deformation lemma, there are $\varepsilon > 0$ and an odd continuous map $\eta : \M \to \M$ such that
\[
\eta(J^{\lambda_2 + \varepsilon}) \subset J^{\lambda_2 - \varepsilon} \cup N_\delta(K^{\lambda_2}),
\]
where $J^a = \set{u \in \M : J(u) \le a}$ for $a \in \R$ (see Corvellec et al. \cite{MR94c:58026}). Since $J > \lambda_2 - \varepsilon$ on $F$ by Lemma \ref{Lemma 3.2} and $N_\delta(K^{\lambda_2}) \cap F = \emptyset$, then $\eta(J^{\lambda_2 + \varepsilon}) \cap F = \emptyset$. Take $\gamma \in \Gamma_2$ such that $\gamma(S^1) \subset J^{\lambda_2 + \varepsilon}$ and let $\widetilde{\gamma} = \eta \circ \gamma$. Then $\widetilde{\gamma} \in \Gamma_2$ and $\widetilde{\gamma}(S^1) \cap F = \emptyset$, contrary to Lemma \ref{Lemma 3.1}.
\end{proof}

We are now ready to prove Theorem \ref{Theorem 1.1}.

\begin{proof}[Proof of Theorem \ref{Theorem 1.1}]
By Lemma \ref{Lemma 3.3}, there is some $u \in K^{\lambda_2} \cap F$, which then is a solution of \eqref{1.1} on $\M$ for $\lambda = \lambda_2$ satisfying
\begin{equation} \label{orth}
\int_{\R^N} |u|^{p-2}\, u\, v_1(u) = 0.
\end{equation}
Since $v_1(u) > 0$ by Lemma \ref{Lemma 2.1}, $u$ is nodal.
\end{proof}

\def\cdprime{$''$}

\end{document}